\documentclass[12pt]{article}
\usepackage{amsmath}
\usepackage{amssymb}
\usepackage{graphicx}
\begin{document}
\bibliographystyle{plain}
\setlength{\baselineskip}{1.5\baselineskip}
\newtheorem{theorem}{Theorem}
\newtheorem{lemma}{Lemma}
\newtheorem{corollary}{Corollary}
\newcommand{\boldm}[1]{\mbox{\boldmath$#1$}}
\newcommand{\bigO}[1]{{\mathcal{O}}\left( {#1 }\right)}
\newcommand{\bigOp}[1]{{\mathcal{O}}_p\left( {#1} \right)}
\newcommand{\smallO}[1]{{o}\left( {#1} \right)}
\newcommand{\sign}[1]{{sign}\left( {#1} \right)}
\newcommand{\smallOp}[1]{{o}_p\left( {#1} \right)}
\newcommand{\thmref}[1]{Theorem~\ref{#1}}
\newcommand{\corref}[1]{Corollary~\ref{#1}}
\renewcommand{\baselinestretch}{1}
\newenvironment{proof}[1][Proof]{\noindent\textbf{#1.} }{\ \rule{0.5em}{0.5em}}

\title{Some Theorems on Optimality of a Single Observation Confidence Interval for
the Mean of a Normal Distribution} 
\author{Stephen Portnoy{$ ^1 $}  \\ \\
Dedicated to the memory of Charles Stein (1920 - 2016) \\ }
\date{February, 2017 (updated: May, 2018)}

\maketitle 

\bigskip

\begin{abstract}

We consider the problem of finding a proper confidence interval for the mean based on a 
single observation from a normal distribution with both mean and variance unknown.  
Portnoy (2018) characterizes the scale-sign invariant rules and shows that the Hunt-Stein construction 
provides a randomized invariant rule that improves on any given randomized rule  in the sense that it 
has greater minimal coverage among 
all procedures with a fixed expected length. Mathematical results here provide a specific mixture
of two non-randomized invariant rules that achieve the minimax optimality. A multivariate
confidence set based on a single observation vector is also developed.

\end{abstract}

\footnotetext[1]{\noindent 
Professor, Department of Statistics, University
of Illinois at Urbana-Champaign \\
\smallskip
$\quad$ corresponding email: sportnoy@illinois.edu \\ }

\medskip

\newpage

\section{Introduction and basic result}

Consider a single observation $\, X \, \sim \, {\cal{N}}(\mu , \, \sigma^2)$.  Let $\, \lambda = \mu/\sigma \,$ and
note that $\, X / \sigma \,  \sim \, {\cal{N}}(\lambda, \, 1) \, $.
 
Now consider the following confidence intervals: let $\, c_1 < c_2 \,$ and define the interval
\begin{equation} \label{CI*}
CI^* \equiv CI^*(X \, ; \,\, c_1 , \, c_2 ) \, = \, \left\{
 \begin{array}{cc}
   c_1 X  \, \leq \, \mu \, \leq \, c_2  X  & \quad   X > 0  \\
   c_2 X  \, \leq \, \mu \, \leq \, c_1  X  & \quad   X < 0
  \end{array}
  \right.
 \end{equation}
 
 Portnoy (2018) provided the following coverage formula:

\begin{theorem} \label{basic}
The probability of coverage for the interval, $CI^*$ for $\lambda > 0$ is:
\begin{equation} \label{Plampos}
P(\lambda ;  \, c_1 , \, c_2)  \, = \, \left\{
 \begin{array}{cc}
   \Phi \left( \lambda \left(1 - \frac{1}{c_2} \right) \right) \, + \, 1 \, - \, \Phi \left( \lambda \left(1 + \frac{1}{c_1} \right) \right) & 
      \quad   c_1 \leq 0 \, ; \,\, c_2 \geq 0  \\
  \Phi \left( \lambda \left(1 - \frac{1}{c_2} \right) \right) \, - \, \Phi \left( \lambda \left(1 + \frac{1}{c_1} \right) \right)  & 
      \quad  c_1 > 0 \, ; \,\, c_2 > 0 \,\, . 
  \end{array}
  \right.
 \end{equation}
 Note that the first line above holds for $c_1 = 0 $ and/or $c_2 = 0$ by taking limits
 as $\, c_1 \nearrow 0 \,$ and/or $\, c_2 \searrow 0 \,$.
The coverage probability for other cases is given from these results by symmetry.
\end{theorem}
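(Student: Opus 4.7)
The plan is to reduce the coverage event for $CI^*$ to an event about a single $\mathcal{N}(\lambda,1)$ variable via case analysis. Setting $W = X/\sigma$, so that $W \sim \mathcal{N}(\lambda, 1)$ and $\mu = \sigma\lambda$, and dividing each defining inequality by $\sigma>0$, one sees that $CI^*$ covers $\mu$ on the disjoint union
\begin{equation*}
\{W>0,\ c_1 W \leq \lambda \leq c_2 W\} \ \cup\ \{W<0,\ c_2 W \leq \lambda \leq c_1 W\},
\end{equation*}
so the coverage probability is the sum of the two pieces' probabilities. What remains is a case split on the signs of $c_1,c_2$, followed by routine $\Phi$-evaluations.

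For the first line of (\ref{Plampos}), with $c_1 \leq 0 \leq c_2$ and $\lambda > 0$: on $\{W > 0\}$ the inequality $c_1 W \leq 0 < \lambda$ holds automatically, leaving the event $\{W \geq \lambda/c_2\}$ (the case $c_2=0$ handled by letting $c_2 \searrow 0$). On $\{W < 0\}$, $c_2 W \leq 0 < \lambda$ is automatic, and $\lambda \leq c_1 W$ becomes $W \leq \lambda/c_1$, a negative threshold compatible with $W<0$. Summing the two $\mathcal{N}(\lambda,1)$ probabilities and using $1 - \Phi(x) = \Phi(-x)$ to rewrite signs gives the first formula. For the second line, with $c_1, c_2 > 0$: the $\{W < 0\}$ branch is vacuous, since $c_1 W < 0 < \lambda$ makes $\lambda \leq c_1 W$ impossible, while on $\{W > 0\}$ both bounds are active and the event becomes the bounded interval $\lambda/c_2 \leq W \leq \lambda/c_1$, whose probability is the difference of two $\Phi$ values matching the stated expression.

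The only real obstacle is the sign-sensitive bookkeeping: each time one divides an inequality of the form $\lambda \leq c_j W$ by a negative $c_j$ or a negative $W$ the direction flips, so one must track carefully which half-line the resulting threshold lies on, and verify that the corresponding event remains inside the correct half $\{W>0\}$ or $\{W<0\}$. The unlisted sign configurations (e.g.\ $c_1 < c_2 < 0$, or $\lambda<0$) follow from the symmetry $X \mapsto -X$, $\mu \mapsto -\mu$, which swaps the two branches of $CI^*$ and is the ``symmetry'' invoked at the end of the statement.
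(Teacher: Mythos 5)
The paper itself contains no proof of this theorem --- it is quoted from Portnoy (2018) --- so your direct route (reduce to $W=X/\sigma\sim{\cal N}(\lambda,1)$, write the coverage event as $\{W>0,\,c_1W\le\lambda\le c_2W\}\cup\{W<0,\,c_2W\le\lambda\le c_1W\}$, i.e.\ $\{c_1\le\lambda/W\le c_2\}$, and do the sign bookkeeping case by case) is exactly the natural and essentially the only argument, and your identification of the events ($\{W\ge\lambda/c_2\}\cup\{W\le\lambda/c_1\}$ when $c_1\le 0\le c_2$; $\{\lambda/c_2\le W\le\lambda/c_1\}$ when $0<c_1<c_2$) is correct.

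The problem is your closing claim that these probabilities ``match the stated expression.'' They do not match \eqref{Plampos} as printed: your events give $\Phi\left(\lambda(1-1/c_2)\right)+1-\Phi\left(\lambda(1-1/c_1)\right)$ in the first case and $\Phi\left(\lambda(1-1/c_2)\right)-\Phi\left(\lambda(1-1/c_1)\right)$ in the second, whereas the display has $\Phi\left(\lambda(1+1/c_1)\right)$ in the subtracted term. These are genuinely different: with $c_1=-1$, $c_2=1$ the displayed first line equals $1$, while the true coverage of $\{|\mu|\le|X|\}$ is $\tfrac12+\Phi(-2\lambda)<1$; and the displayed second line does not tend to $0$ as $c_1\uparrow c_2$, though the coverage of a degenerate interval must. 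The resolution is that \eqref{Plampos} as printed carries a sign slip --- the paper's own subsequent manipulations use the version you derived (e.g.\ $P(\lambda,c_1,0)=1-\Phi\left(\lambda(1-1/c_1)\right)$ in the proof of Lemma~\ref{cs}, and the curvature formula \eqref{dP2dc2} treats the coverage as built from $\Phi(\lambda(1-1/c))$ at $c=c_1,c_2$). So your derivation is right, but you should have written out the $\Phi$-arguments explicitly, flagged the discrepancy with the display, and stated the corrected formula; asserting agreement with the formula as printed is a false step, and glossing it with ``using $1-\Phi(x)=\Phi(-x)$'' hides precisely the sign issue the proof is supposed to settle.
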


\smallskip

Portnoy (2018) also characterizes the scale-sign invariant rules as having the form of $CI^*$ and 
provides a version of the Hunt-Stein Theorem (Hunt and Stein, 1945, also see Lehmann, 1959)
to show that for any (randomized) confidence interval,
there is a randomized invariant rule whose minimal coverage (over the parameters) is larger and whose
expected length is the same.
Section 2 below finds a specific mixture of two non-randomized invariant rules that achieves minimaxity (in the
sense that it maximizes minimal coverage among all rules with fixed expectted length). Section 3
provides a brief discussion of numeric computation. Section 4 proves that there is a norm-bounded confidence set
that provides a proper confidence set for the mean based on a single (multivariate) observation from
a multivariate normal distribution with arbitrary mean and covariance matrix.

\bigskip

\section{Optimal Mixture}

The first rather complicated theorem shows that for any randomized invariant procedure there is 
a mixture of no more than 8 specific non-randomized invariant confidence intervals that is as good
(in the minimax sense above). A corollary uses linear programming theory to show
that a mixture of two specific intervals suffices. It also shows that there is a best such rule, and clearly 
this rule must be minimax (since no other rule can be strictly better).

\smallskip

\begin{theorem} 
Let $F$ be a probability distribution on $\{ c_1 < c_2 \}$ generating a randomized
invariant confidence interval. Then there are constants: $\, c_1^* \leq a_1^* \leq 0 < 1 \leq c_2^* \,$ 
and a finite mixture, $F^*$, on the intervals: $[c_1^* , \, 1]$, $[a_1^* , \, 1]$, $[c_1^* , \, c_2^*]$,
$[a_1^* , \, c_2^*]$,  $[0 , \, 1]$, $[0, \, c_2^*]$, $[1, \, c_2^*]$, and $\, \phi \,$ (the empty interval) with 
 at least as large minimal coverage probability and no larger expected length. That is,
\begin{eqnarray*} 
\inf_\lambda \, E_{F^*} \, P(\lambda ; \, C_1 , \, C_2) & \geq & 
\inf_\lambda \, E_{F} \, P(\lambda ; \, C_1 , \, C_2) \\
E_{F^*} (C_2 - C_1)  & \leq & E_{F} (C_2 - C_1) \,\, ,
\end{eqnarray*}
where $P$ denotes the coverage probability given by \eqref{Plampos}
and is repeated below for convenience.
\end{theorem}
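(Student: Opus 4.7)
The plan is to reduce the arbitrary mixture $F$ to the desired $F^*$ via a sequence of local mass-transport steps, each of which preserves (or improves) $\inf_\lambda E_F P(\lambda;C_1,C_2)$ and preserves (or shrinks) $E_F(C_2-C_1)$. The natural break lines in the plane $\{c_1 < c_2\}$ are $c_1=0$ and $c_2=0$ (where the functional form in \eqref{Plampos} switches), $c_2=1$, and an interior negative value $c_1=a_1^*$ (the knot at which the qualitative behavior of the $\Phi$-argument involving $c_1$ changes). These cuts partition the support of $F$ into finitely many subregions inside of which the coverage has a single functional form with controllable signs for the partial derivatives in $c_1$ and $c_2$.

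Within each subregion I would replace the conditional distribution by a mixture supported on at most two boundary points of that subregion, chosen so that the conditional expected length is preserved and the conditional coverage is weakly improved for every $\lambda$. Aggregating across all subregions, the left endpoints used collapse to $\{c_1^*,a_1^*,0,1\}$ and the right endpoints to $\{1,c_2^*\}$, where $c_1^*$ and $c_2^*$ are chosen as extreme values bracketing the relevant portion of the support of $F$. This yields a $4\times 2$ grid of candidate intervals; the degenerate pair $(1,1)$ is discarded, and the empty interval $\phi$ is retained as a length-slack so that mass can be moved out of nontrivial intervals to strictly decrease the expected length if the construction has not yet matched $E_F(C_2-C_1)$.

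The main obstacle is that $P(\lambda;c_1,c_2)$ is neither jointly convex nor concave in $(c_1,c_2)$, so Jensen's inequality cannot be invoked in one shot. The internal knot $a_1^*$ is introduced precisely to split the negative-$c_1$ region into two pieces inside of which the curvature of $c_1\mapsto 1-\Phi(\lambda(1\pm 1/c_1))$ has sign that is uniform over $\lambda$; within each such piece, coverage-preserving and length-preserving mixing onto the boundary endpoints goes through uniformly in $\lambda$, using log-concavity of the standard normal density to control the $\lambda$-dependence. Stitching the eight boundary replacements into a single coherent $F^*$ and verifying the length and coverage inequalities are then bookkeeping, but this uniform-in-$\lambda$ verification at each local step, and the correct placement of $a_1^*$ so that \emph{every} $\lambda$ gets a nondecrease, is the technical crux of the argument.
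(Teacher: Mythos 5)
Your plan founders on the step you yourself identify as the crux: the claim that a \emph{fixed} interior knot $a_1^*$ splits the negative-$c_1$ region into pieces on which the curvature of the coverage in $c_1$ has a sign that is uniform over $\lambda$. It does not. Differentiating the term $1-\Phi\bigl(\lambda(1+1/c_1)\bigr)$ twice in $c_1$ gives a factor proportional to $\lambda^2/c_1^2+\lambda^2/c_1-2$, so the inflection point solves a quadratic in $1/c_1$ whose root depends on $\lambda$ (this is exactly the $a_1(\lambda)$ of \eqref{a1def}); as $\lambda$ ranges over $(0,\infty)$ this knot sweeps across an entire subinterval of the negative axis (it tends to $0^-$ as $\lambda\to 0$ and to $-1$ as $\lambda\to\infty$). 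Consequently there is no fixed partition of $\{c_1<0\}$ on which length-preserving mixing onto region boundaries improves coverage for \emph{every} $\lambda$ simultaneously, and ``log-concavity of $\varphi$'' does not repair this. The same problem occurs for the upper endpoint: the inflection in $c_2$ is at a $\lambda$-dependent point $a_2(\lambda)$, so endpoint-pushing in $c_2$ itself is not available either. The paper's proof acknowledges both obstacles and gets around them differently: for $c_2>1$ it reparametrizes to $d=1-1/c_2$, in which the relevant term $\Phi(\lambda d)$ is concave \emph{uniformly in $\lambda\ge0$} while the length $c_2=1/(1-d)$ is convex, so a single Jensen step simultaneously raises coverage and shrinks expected length; for the negative lower endpoint it only performs the two-point reduction (onto $c_1^*(\lambda)$ and $a_1(\lambda)$) \emph{at a fixed $\lambda$}.

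That fixed-$\lambda$ reduction is then converted into a statement about the minimal coverage by a global argument your proposal omits entirely: writing the mixture coverage as in \eqref{covmix}, the paper shows $P'(\lambda)$ is a positive constant minus a function decreasing to zero, so $P$ is eventually increasing and its infimum is attained either at $\lambda=0$ or at a unique interior $\lambda^*$; it then treats these two cases separately (a dominated-convergence limit of $F_\lambda$ as $\lambda\to0$, where $a_1(\lambda)\to0$, in the first case; an $\epsilon$-mixture $(1-\epsilon)F^*+\epsilon F_{\lambda^*}$ yielding a strict improvement near $\lambda^*$ in the second), and finally cleans up the mass at $c_2=0$ via the reflection Lemma \ref{cs}. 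Without some substitute for (i) the $d$-reparametrization trick for the upper endpoint and (ii) this analysis of where the $\lambda$-infimum is attained, your ``bookkeeping'' step cannot be carried out: the local replacements you describe are simply not uniformly-in-$\lambda$ valid, so the proposal as written has a genuine gap rather than being an alternative route.
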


\bigskip

\begin{proof} The proof is given by a series of lemmas.
To simplify notation, refer to the interval $CI^*(X ; \, c_1 , \, c_2)$ as $[c_1, \, c_2]$. 
By scale and sign invariance,  we can restrict to the case
$\, \lambda \geq 0 \,$ without loss of generality.

\begin{lemma} \label{cs}
The distribution $F$ can be restricted to one putting probability 1 on the set
$\, \{ [c_1 , \, c_2] : \,\, - c_2 \leq c_1 \leq 1 \, $ and $\,  c_2 > 0 \, \} \,$.
\end{lemma}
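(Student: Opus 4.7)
The plan is to construct $F^*$ as the pushforward of $F$ under a deterministic map built from two replacements, each of which preserves the length $c_2 - c_1$ and weakly increases the coverage probability $P(\lambda;c_1,c_2)$ at every $\lambda$; integrating against $F$ then yields both inequalities in the lemma.

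The first replacement (``reflection'') sends the rule $[c_1,c_2]$ to $[-c_2,-c_1]$ whenever $c_1 + c_2 < 0$ and is the identity otherwise. Lengths are obviously preserved. For the pointwise coverage comparison, I reduce to $\lambda \geq 0$ by sign-invariance and change variables to $U = X/\mu \sim {\cal N}(1,1/\lambda^2)$. In this coordinate, coverage takes the form $P(U \in R)$ for an explicit region $R = R(c_1,c_2)$, and the reflected rule has cover region $-R$. Writing
\[
P(U \in R) - P(U \in -R) \, = \, \int_R \bigl(f_U(u) - f_W(u)\bigr)\,du,
\]
where $f_W$ denotes the density of ${\cal N}(-1,1/\lambda^2)$, the integrand is strictly positive on $(0,\infty)$ and strictly negative on $(-\infty,0)$ by the likelihood-ratio identity $f_U(u)/f_W(u) = \exp(2\lambda^2 u)$. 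A case analysis on the three sign patterns of $(c_1,c_2)$ then shows the sign of this integral always matches the sign of $c_1 + c_2$, so reflection weakly increases coverage. After this stage the support lies in $\{c_1 + c_2 \geq 0\}$, which combined with $c_1 < c_2$ forces $c_2 > 0$.

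The second replacement (``left truncation'') acts only on atoms with $c_1 > 1$, sending $[c_1,c_2]$ to $[1,\, 1 + (c_2 - c_1)]$; lengths are preserved by construction. In the only relevant regime $c_1,c_2 > 1$, the original coverage equals $\int_{\lambda(1-1/c_1)}^{\lambda(1-1/c_2)}\phi(t)\,dt$ and the new coverage equals $\int_0^{\lambda\delta/(1+\delta)}\phi(t)\,dt$ with $\delta = c_2 - c_1$. The factorization $c_1 c_2 - (1+\delta) = (c_1 - 1)(c_1 + 1 + \delta) > 0$ shows that the new window is strictly longer, and it starts at $0$ rather than at a positive point; monotonicity of $\phi$ on $[0,\infty)$ then yields a strict coverage increase. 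The composition of the two replacements lands in $\{-c_2 \leq c_1 \leq 1,\, c_2 > 0\}$, as required. The main obstacle is the reflection inequality: the three sign patterns of $(c_1,c_2)$ must each be verified, and although the likelihood-ratio identity makes the sign of the integrand immediate, one still has to confirm that the geometry of $R$ (a closed interval in the two same-sign cases, the complement of an open interval when $c_1 < 0 < c_2$) combines correctly with the sign of $c_1 + c_2$.
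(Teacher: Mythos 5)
Your argument is correct, but it proves the lemma by a genuinely different route than the paper. The paper works from the coverage formula of Theorem~\ref{basic} and uses calculus: two ``sliding'' arguments (translate the interval, keeping its length fixed, and show via the sign of $\frac{\partial}{\partial c_2}P(\lambda;c_2-h,c_2)$, resp.\ $\frac{\partial}{\partial c_1}P(\lambda;c_1,c_1+h)$, that coverage increases until $c_2\ge 0$, resp.\ until $c_1\le 1$), a separate boundary comparison of $[c_1,0]$ with $[0,-c_1]$ to make $c_2>0$ strict, and then a reflection comparison $\Delta=P(\lambda;c_1,c_2)-P(\lambda;-c_2,-c_1)$ analyzed through its derivative in $h$. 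You instead make a single reflection step do double duty: working with the density of $U=X/\mu$ and the likelihood ratio $f_U(u)/f_W(u)=e^{2\lambda^2u}$, you show the reflected rule is weakly better exactly when $c_1+c_2<0$, which simultaneously yields $c_1\ge -c_2$ and (with $c_1<c_2$) $c_2>0$; you then handle $c_1>1$ by the same replacement $[c_1,c_2]\mapsto[1,1+(c_2-c_1)]$ as the paper, but justified by an explicit window comparison ($\phi$ decreasing on $[0,\infty)$ together with $c_1c_2-(1+\delta)=(c_1-1)(c_1+1+\delta)>0$) rather than a derivative computation. Your route is more elementary and works directly from the definition of the event (incidentally sidestepping the sign slip in the displayed formula \eqref{Plampos}, whose second line should read $\Phi(\lambda(1-1/c_1))$, as your window formula correctly has); the paper's route is shorter given Theorem~\ref{basic} and yields monotonicity-in-translation facts it exploits again later. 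The case analysis you defer does check out: in the two same-sign cases $R$ is a compact interval contained in the half-line whose sign matches $c_1+c_2$, and in the straddling case $c_1<0<c_2$ one integrates over the complementary interval $(1/c_1,1/c_2)$ and uses that $u\mapsto f_U(u)-f_W(u)$ is odd, so the sign of the difference reduces to comparing $1/c_2$ with $-1/c_1$, i.e.\ to the sign of $c_1+c_2$. Two trivial caveats: the change of variables needs $\mu\ne 0$, but at $\lambda=0$ an interval and its reflection have identical coverage, so the weak inequality persists; and the degenerate boundary patterns $c_1=0$ or $c_2=0$ fall under the same sign argument since $R$ is then a half-line on the appropriate side.
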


\begin{proof}
To show that we can take $\, c_2 \geq  0 \,$, set $\, c_1 = c_2 - h \,$, use Theorem 1, and consider
$$
\frac{\partial}{\partial c_2 } \, P(\lambda ;  \, c_2  - h , \, c_2 )  = 
\frac{\lambda}{c_2^2 } \, \varphi \left(\lambda \left( 1 - \frac{1}{ c_2  } \right) \right) \, - \,
\frac{\lambda}{(c_2 - h)^2 } \, \varphi \left(\lambda \left( 1 - \frac{1}{ c_2 - h } \right) \right) \, .
$$
For $c_2 < 0$, both factors of the first summand above are greater than the corresponding
factors of the second summand, and so it follows that the function
$ \, P(\lambda ;  \, c_2 - h , \, c_2)  > 0 \,$ is increasing in $c_2$.
Therefore, the interval $\, [ - (c_2 - c_1) , \, 0 ] $ has the same length but larger probability than $[c_1, \, c_2]$.
So we can take $\, c_2 \geq 0 \,$. To show the inequality is strict, we have (from Theorem 1)
\begin{eqnarray*}
P(\lambda , \, c_1 , \, 0) \,\, = \,\, 1 - \Phi \left(\lambda \left( 1 - \frac{1}{ c_1  } \right) \right) & = & 
\Phi \left(\lambda \left( \frac{1}{ c_1} - 1 \right) \right) \\
& < & \Phi \left(\lambda \left( 1 + \frac{1}{ c_1  } \right) \right)
= P(\lambda , \, 0 , \, -c_1 ) \, ,
\end{eqnarray*}
and so $c_2$ can be taken to be strictly positive.

\bigskip

A similar proof shows that we can take $c_1 \leq 1\,$: let $c_2 = c_1 + h \,$ and consider
$$
\frac{\partial}{\partial c_1 } \, P(\lambda ;  \, c_1 , \, c_1 + h)  = 
\frac{\lambda}{(c_1 + h)^2 } \, \varphi \left(\lambda \left( 1 - \frac{1}{ c_1 + h } \right) \right) \, - \,
\frac{\lambda}{c_1^2 } \, \varphi \left(\lambda \left( 1 - \frac{1}{ c_1 } \right) \right) \, .
$$
For $c_1 \geq 1$, both factors of the first summand above are smaller than the corresponding
factors of the second summand, and so it follows that the function 
$ P(\lambda ;  \, c_1 , \, c_1 + h)  < 0 \,$ is decreasing;
and so the interval $[1 , \, c_2 - c_1 +1] $ has the same length and larger probability than $[c_1, \, c_2]$.

\bigskip

Finally, to show that we can take $\, c_1 > - c_2 \,$, first note that if $c_1 \geq 0 \,$, 
the inequality is immediate (since $\, c_2 > 0 $). Next, to show that if this inequality fails, the interval
$[- c_2 , \, - c_1]$ has larger probability (and the same length) as $[ c_1 , \, c_2 ]$, define
$\, \Delta \equiv P(\lambda ;  \, c_1 , \, c_2 ) - P(\lambda ;  \, - c_2 , \, c_1) $. Let $\, b_1 = 1/c_1 \,$,
$\, b_2 = 1/c_2 \,$, and define $h$ so that $\, b_2 = - b_1 - h \,$. Note that $\, b_2 < - b_1 \,$ 
(or equivalently, $\, c_2 > - c_1 \,$ if and only if $\, h > 0 \,$. Then,
 $$
 \Delta =    \Phi \left( \lambda (1 + b_1 + h) \right) \, - \, \Phi \left( \lambda (1 - b_1) \right) 
 - \Phi \left( \lambda (1 + b_1 ) \right) \, + \, \Phi \left( \lambda (1 + -b_1 + h ) \right) 
 $$
 and
 $$
 \frac{\partial}{\partial h} \Delta = \lambda \, \varphi \left( \lambda (1 + b_1 + h) \right) \, - \,
 \varphi \left( \lambda (1 + -b_1 + h ) \right) \, > \, 0 \, .
 $$
 for $\, h > 0 \,$. Now $\, \Delta = 0 \,$ when $\, h = 0 \,$; and hence $\, \Delta \geq 0 \,$ as
 long as $\, h \geq 0 \,$. Therefore, the interval $[ c_1 , \, c_2 ]$ has larger coverage probability
 that $[ - c_2 , \, - c_1 ]$ as long as $\, h \geq 0 \,$, or equivalently $\, c_2 > - c_1 \,$.
 \end{proof}

\medskip

The following Lemma presents some derivative calculations and subsequent convexity and concavity
properties that will facilitate analyzing the coverage probabilities.
 
\begin{lemma} \label{derivs}
\begin{eqnarray} 
\frac{\partial^2 \, \Phi(\lambda ( 1 - 1/c))}{\partial c^2} & = & \frac{\lambda}{c^3} \, 
\varphi \left( \lambda \left( 1 - \frac{1}{c} \right) \right) \,
\left[ \frac{\lambda^2}{c^2} - \frac{\lambda^2}{c} - 2 \right]   \label{dP2dc2} \\
\frac{\partial \Phi(\lambda \, d)}{\partial d} & = & d \, \varphi(d \, \lambda)   \,\, ,  \label{dPdlam} \\
\frac{\partial^2 \Phi(\lambda \, d)}{\partial d^2} & = & - \lambda \,  d^3 \, \varphi(d \, \lambda)  \label{dP2dlam2}
\end{eqnarray}

From \eqref{dP2dc2}, there are functions $\, a_1(\lambda) < 0 \,$ and $\, a_2(\lambda) > 1 \,$ such that
the coverage probability $\, P(\lambda ;  \, c_1 , \, c_2) \,$ is concave in $c_1$ for $\, c_1 \leq a_1(\lambda) \,$,
convex in $c_1$ for $\, a_1(\lambda) \leq c_1 \leq 0 \,$, and 
a possibly different convex function for $\, 0 \leq c_1 \leq 1 \,$; and is convex in $c_2$ for
$\, 0 \leq c_2 \leq a_2(\lambda) \,$ and convex in $c_2$ for  $\, c_2 \geq a_2(\lambda)  \,$.

From \eqref{dP2dlam2}, $ \Phi(\lambda \, d) $ is increasing and concave in  $\lambda$ for $d \geq 0$, and 
decreasing and convex in $\lambda$ for $ d \leq 0 \,$.
\end{lemma}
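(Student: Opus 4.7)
The plan is to first verify the three derivative identities by chain rule, and then read off the claimed convexity structure from the sign of the bracketed factor in \eqref{dP2dc2}.

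Each of \eqref{dP2dc2}--\eqref{dP2dlam2} is a one-step chain-rule calculation using $\varphi'(x) = -x\varphi(x)$. For \eqref{dP2dc2} I would put $u(c) = \lambda(1-1/c)$, note $u'(c) = \lambda/c^2$ and $u''(c) = -2\lambda/c^3$, and use $\partial^2 \Phi(u)/\partial c^2 = -u\,\varphi(u)(u'(c))^2 + \varphi(u)\,u''(c)$; factoring $\lambda \varphi(u)/c^3$ gives the bracketed quadratic. Identities \eqref{dPdlam} and \eqref{dP2dlam2}, interpreted as derivatives in $\lambda$ with $d$ held fixed (the only reading consistent with the concluding sentence of the lemma), are analogous one-liners.

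For the convexity statements, I would first use Lemma \ref{cs} to restrict to $-c_2 \leq c_1 \leq 1$ and $c_2 > 0$. On each branch of \eqref{Plampos} the dependence of $P$ on a single endpoint $c_i$ goes through exactly one term of the form $\pm \Phi(\lambda(1\mp 1/c_i))$. When the argument carries a $+$ sign (the $c_1$-terms), the substitution $\tilde c = -c_i$ produces $\Phi(\lambda(1+1/c_i)) = \Phi(\lambda(1-1/\tilde c))$ with $(\partial/\partial c_i)^2 = (\partial/\partial \tilde c)^2$, so \eqref{dP2dc2} applies with $c$ replaced by $\tilde c$. In every subregime the sign of $\partial^2 P/\partial c_i^2$ thus reduces, up to an explicit overall sign coming from the $\pm\Phi$ prefactor and from $\lambda/c^3$, to the sign of the same quadratic $Q(t) = \lambda^2 t^2 - \lambda^2 t - 2$ evaluated at $t \in \{1/c_2,\ -1/c_1\}$. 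Since $Q(0) = -2 < 0$ and $Q(t) \to +\infty$ at $t = \pm \infty$, $Q$ has exactly one positive root $t_+$ and one negative root $t_-$; inverting $t = 1/c$ then furnishes the thresholds $a_1(\lambda)$ and $a_2(\lambda)$, and the convexity/concavity labels in each subregion follow by tracking the overall prefactor sign. The last paragraph of the lemma is immediate from \eqref{dPdlam} and \eqref{dP2dlam2}: for $\lambda \geq 0$ and $\varphi > 0$, the first derivative in $\lambda$ has sign $\operatorname{sign}(d)$ and the second derivative has sign $-\operatorname{sign}(d)$.

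The only real obstacle is this sign bookkeeping. The same quadratic $Q$ controls all three subregimes $\{c_1 < 0\}$, $\{0 < c_1 \leq 1\}$, and $\{c_2 > 0\}$, but the overall sign attached to $Q$ flips between them through the combined effect of the $\pm\Phi$ prefactor, the sign of $c^3$, and the sign of the substituted $\tilde c$. Once that tabulation is done correctly the analytic content of the lemma reduces to the elementary fact that $Q$ has a unique sign change on each side of $0$.
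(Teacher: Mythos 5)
Your handling of the three displayed identities and of the lemma's final paragraph is correct and coincides with the paper's (a chain-rule computation from $\varphi'(x)=-x\varphi(x)$), and your reading of \eqref{dPdlam}--\eqref{dP2dlam2} as derivatives in $\lambda$ with $d$ fixed is the intended one. Reducing the $c_1$- and $c_2$-statements to the sign of $Q(t)=\lambda^2t^2-\lambda^2t-2$ evaluated at $t=1/c$ is also exactly the paper's device \eqref{a1def}, so in approach you and the paper agree.

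The genuine gap is that the step you defer --- ``the convexity/concavity labels follow by tracking the overall prefactor sign'' --- is the only nontrivial content of the middle paragraph, and carried out as you set it up it does not yield the lemma. You work from \eqref{Plampos} as printed, reflecting the $c_1$-term via $\tilde c=-c_1$, so the evaluation point is $t=-1/c_1>0$ and the sign of $\partial^2P/\partial c_1^2$ is $-\mathrm{sign}\bigl(Q(-1/c_1)\bigr)$; writing $t_+>1$ and $t_-<0$ for the positive and negative roots of $Q$, this gives concavity on $(-1/t_+,\,0)$ and convexity on $(-\infty,\,-1/t_+)$ --- the opposite of the lemma's pattern, with the split governed by $t_+$ rather than by anything negative. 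The reconciliation is that the $c_1$-argument in \eqref{Plampos} is a sign misprint (test $c_1=-1$, $c_2=1$: the printed formula returns coverage $1$ for the interval $[-|X|,\,|X|]$); the correct $c_1$-term is $1-\Phi(\lambda(1-1/c_1))$, so no reflection is involved, the evaluation point is $t=1/c_1<0$, and the negative root does govern the $c_1$-split, at $c_1=1/t_-$ --- note that the $a_1,a_2$ of \eqref{a1def} are roots in the variable $1/c$, so the thresholds in $c$ are their reciprocals. Likewise the inflection of $\Phi(\lambda(1-1/c_2))$ in $c_2$ sits at $c_2=2/(1+\sqrt{1+8/\lambda^2})\in(0,1)$, so ``$a_2(\lambda)>1$'' only makes sense in the $1/c$ variable and one of the lemma's two ``convex'' labels for $c_2$ must read ``concave''; and on $0\le c_1\le 1$ the sign of $Q(1/c_1)$ also changes at $1/t_+$, so uniform convexity there needs qualification. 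These are precisely the points your deferred tabulation has to adjudicate; leaving it undone means the proposal establishes the derivative formulas but not the convexity claims, which is where all the content (and all the trouble) of this lemma lies.
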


\begin{proof} 
The derivative calculations are straightforward, using the fact that $\varphi'(x) = -x \varphi(x)$.
Convexity and concavity in $\lambda$ is also a trivial consequence of \eqref{dP2dlam2}. 

For the behavior of the coverage probability as a functions of $c_1$ and $c_2$, note that derivatives
of $\, P(\lambda ;  \, c_1 , \, c_2) \,$ will have the form \eqref{dP2dc2} (with arguments $c_1$ or $c_2$).
Clearly \eqref{dP2dc2} vanishes if (and only if)
\begin{equation} \label{a1def}
\frac{1}{c} = \frac{1 \pm \sqrt{1 + 8 / \lambda^2)}}{2}  \, \equiv\, \{ a_1(\lambda) < 0 , \,\, a_2(\lambda) > 1 \}
\end{equation}
Thus  $\, P(\lambda ;  \, c_1 , \, c_2) \,$ has sign changes only at $a_1(\lambda) < 0 $ and $a_2(\lambda) > 1$.
The convexity and concavity claims
follow directly by examining the behavior of 
$\, P(\lambda ;  \, c_1 , \, c_2) \,$ as $c_1$ and $c_2$ tend to $\infty$, 0, 1, 0, and $- \infty$, and noting
that the derivatives are  discontinuous at $\lambda = 0$.
\end{proof} 

\begin{lemma} \label{improve}
Given any distribution, $F(c_1 , \, c_2)$, generating a randomized invariant confidence interval, there is
a constant $\, c_2^* \,$, a 
random variable, $\, C \sim  F_1$ where $F_1$ is a distribution on $(- \infty , \, 0)$, and a family of
conditional distributions, $G_c(c_1, \, c_2)$, such that conditional on $C = c$, $G_c$ is finite discrete mixture
on the intervals: $[c , \, 1]$, $[c , \, c_2^*]$, $[0 , \, 1]$, $[0, \, c_2^*]$, $[1, \, c_2^*]$,
and $\phi$ (the empty interval); 
and such that the randomized confidence interval given by $F_1$ and $G_c$ has coverage probability 
no smaller than that of $F$ and expected length no larger that of $F$ uniformly
in $\lambda$. Furthermore, the improvement is strict unless $G_c$ is such a finite mixture. Finally,
for each fixed $\lambda$ there are functions $\, a_1(\lambda)$ (see \eqref{a1def}) and 
$\, c_1^*(\lambda) < a_1(\lambda) \,$ such that $F_1$ can be replaced by a finite discrete mixture on
$\{ c_1^*(\lambda) , \, a_1(\lambda , \, 0 ) \}$ giving no smaller coverage probability and no larger expected length
at the specific value of $\lambda$. That is, ``$c$'' in the first two intervals in the list above can be replaced by
either $\, c_1^*(\lambda) \,$ or $\, a_1(\lambda) \,$ to provide a list of 8 intervals.
Again, the improvement is strict unless $F_1$ is such a mixture.
\end{lemma}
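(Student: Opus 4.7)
The plan is to reduce an arbitrary invariant distribution $F$ to the stated finite-support form by a sequence of mean-preserving moves that exploit the convex/concave structure of Lemma~\ref{derivs}. First I would apply Lemma~\ref{cs} to restrict $F$ to the region $\{-c_2 \leq c_1 \leq 1,\ c_2 > 0\}$. The key structural observation from \eqref{Plampos} is that on this region the coverage decomposes additively as $P(\lambda; c_1, c_2) = g_\lambda(c_2) + h_\lambda(c_1)$ with $g_\lambda(c_2) = \Phi(\lambda(1 - 1/c_2))$, and $h_\lambda$ given by the other term (with one formula for $c_1 \leq 0$ and another for $c_1 > 0$). Since the expected length $E[C_2 - C_1]$ is also additive in the marginals, the optimization of coverage at fixed length decouples into two independent problems, one for each marginal.

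For the $C_2$ marginal, the inflection analysis of Lemma~\ref{derivs} (specifically \eqref{a1def}) implies that $g_\lambda$ is concave on $[1, \infty)$ for every $\lambda > 0$. Jensen's inequality then collapses the conditional law of $C_2$ restricted to $[1, \infty)$ to a point mass at its mean $c_2^* \geq 1$, uniformly improving $E[g_\lambda(C_2)]$ while preserving expected length. Mass with $c_2 \in (0, 1)$, where the convexity of $g_\lambda$ varies with $\lambda$, is handled by a joint move on $(c_1, c_2)$ that transfers it either to one of the boundary intervals among $\{[0,1], [0, c_2^*], [1, c_2^*]\}$ or to the empty-interval option $\phi$, chosen so that coverage does not decrease and length does not increase uniformly in $\lambda$. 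The $C_1$ marginal is treated analogously: on the convex subintervals $(a_1(\lambda), 0)$ and $(0, 1)$, mean-preserving two-point replacements send mass to $\{a_1(\lambda), 0\}$ and $\{0, 1\}$, while mass on the concave tail $(-\infty, a_1(\lambda))$ is retained as the marginal $F_1$ of the auxiliary variable $C$. Packaging the results produces exactly the six intervals $[c,1]$, $[c, c_2^*]$, $[0,1]$, $[0, c_2^*]$, $[1, c_2^*]$, and $\phi$ promised in the statement.

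The $\lambda$-pointwise refinement at the end of the lemma follows by one final Jensen step applied to $F_1$: by concavity of $h_\lambda$ on $(-\infty, a_1(\lambda))$, the restriction of $F_1$ to this tail can be replaced, at the fixed value of $\lambda$, by the two-point mixture on $\{c_1^*(\lambda), a_1(\lambda)\}$ preserving the conditional mean; this preserves length and does not decrease coverage. Strictness of each improvement, unless $F$ is already of the claimed form, is immediate from strict convexity or concavity on the relevant open subintervals. The principal obstacle I anticipate is the uniformity-in-$\lambda$ of the $C_2$ step: the inflection point moves with $\lambda$, and only the uniform bound guaranteeing concavity on $[1, \infty)$ allows a single constant $c_2^*$ to serve for every $\lambda$. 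The mass with $c_2 \in (0, 1)$ cannot be cleaned up by a univariate Jensen step and genuinely requires either joint moves with $C_1$ or appeal to the $\phi$ option; this is where the argument will require the most care. On the $C_1$ side, the fact that $a_1(\lambda)$ ranges over all of $(-\infty, 0)$ as $\lambda$ varies is precisely why the last two-point discretization of $F_1$ can only be carried out pointwise in $\lambda$ rather than uniformly.
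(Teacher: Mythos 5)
Your overall route is the paper's route: restrict $F$ by Lemma \ref{cs}, use the additive structure of \eqref{Plampos} so that coverage and expected length separate into a $c_1$-part and a $c_2$-part, collapse the $C_2$-mass on $[1,\infty)$ to a single point by Jensen, and discretize the negative $c_1$-mass only pointwise in $\lambda$ because $a_1(\lambda)$ in \eqref{a1def} moves with $\lambda$. Your version of the $C_2$-step (Jensen directly in $c_2$, using concavity of $\Phi(\lambda(1-1/c_2))$ on $[1,\infty)$ for every $\lambda$) is legitimate and only cosmetically different from the paper's, which works in $d=1-1/c_2$, using concavity of $\Phi(\lambda d)$ in $d$ together with convexity of $c_2=1/(1-d)$, thereby also shortening the interval.

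The genuine gap is the mass with $c_2\in(0,1]$ (and, relatedly, lower endpoints in $(0,1)$). You only assert that it ``is handled by a joint move on $(c_1,c_2)$ \ldots chosen so that coverage does not decrease and length does not increase uniformly in $\lambda$,'' which restates the conclusion rather than proving it, and you yourself flag it as the hard point; moreover you claim this mass ``cannot be cleaned up by a univariate Jensen step,'' which is contrary to what the paper actually does. In the paper's Parts 2a--2b the lower endpoint is held fixed and the $C_2$-mass on $[\max(c_1,0),\,1]$ is replaced by a mean-preserving two-point mass at the endpoints of that range (yielding $\phi$ and $[c_1,1]$ when $c_1\geq 0$, and $[c_1,0]$, $[c_1,1]$ when $c_1<0$), the coverage comparison being justified by the convexity assertion of Lemma \ref{derivs} on that range. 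If you distrust that convexity (your remark that the curvature ``varies with $\lambda$'' suggests you do), then you must supply a substitute argument with explicit weights and a verified uniform-in-$\lambda$ coverage inequality; your proposal contains neither, so the reduction to the stated six-interval support for $G_c$ is not established. Two smaller misstatements: moving $c_1$-mass on $(a_1(\lambda),0)$ to $\{a_1(\lambda),0\}$ is $\lambda$-dependent and so cannot belong to the uniform stage defining $(F_1,G_c)$ --- it belongs, as in the paper's Part 3, to the pointwise refinement; and in that refinement the tail mass on $(-\infty,a_1(\lambda))$ is collapsed by concavity to a single point at its conditional mean $c_1^*(\lambda)=E[C_1\mid C_1\leq a_1(\lambda)]$, not to a mean-preserving two-point law on $\{c_1^*(\lambda),a_1(\lambda)\}$ as you describe.
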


\begin{proof}
Part 1: $c_2 \geq 1\,$.  Fix the lower endpoint, $c_1$ and let $d \equiv (1 - 1/c_2)$. Then $\, d \in [0, \, 1] \,$
and from Lemma {\ref{derivs}} (see \eqref{dP2dlam2}), the second derivative of the coverage probability is
just $\, E_F [ - \lambda \,  D^3 \, \varphi(D \, x) ] \,$; and so the coverage probability is (strictly) concave. 
Also, $\, c_2 = 1/(1-d) \,$ is convex (for $\, d \geq 1 )$. Thus, from Jensen's inequality, any $F$-probability on
$\, c_2 > 1 \,$ can be replaced by a point mass at $\, c_2^* = 1/(1 - d^*) \,$ where $\, d^* = E D  \in (0 , \, 1 )\,$ 
for which both 
$$
P(  \lambda , \, c_1 , \, c_2^*) > E_F  [ P(\lambda, \,  c_1 , \, C) \, | \,\, C > 1 ]
\quad {\mbox{\text{\rm and}}} \quad c_2^* \leq E [ C \, | \,\, C > 1]
$$
uniformly in $\lambda$.

\medskip

Part 2a: $\, 0 < c_2 \leq 1 \,$ and $ \, 0 \leq c_1 \leq 1 \,$.
From Lemma {\ref{derivs}} (see \eqref{dP2dc2}, the coverage probability is convex on $\, c_1 \leq c_2 \leq 1 \,$.
So choose $\, q \,$ so that 
$$
  1-q = E_F [ P(C_2, \, ]\lambda) \, | \,\,\, c_1 \leq C_2 \leq 1 ] \,\, .
$$.
 Then the mixture taking $\, C_2 = c_1 \,$
 with probability $q$ and $\, C_2 = 1 \,$ with probability $1 - q$ generates the empty interval,
 $\phi$, (with probability $q$) and
 the interval $[c_1,\, 1]$ (with probability $1 - q$) satisfying
 (simultaneously and uniformly in $\lambda$) both
 \begin{eqnarray*}
q \, P( \lambda  ; \phi ) + (1 - q) \, P( \lambda ; \, c_1, \, 1)  & > &
E_F  [ P( \lambda ; \, c_1 , \, C_2) \, | \, c_1 \leq C_2 \leq 1 ] \\
 \quad q \times  0 + (1-q) \times (1 - c_1) & =  & E_F  [ C_2 | \,\, c_1 \leq C_2 \leq 1 ]
\end{eqnarray*}

\medskip

Part 2b: $\, 0 \leq c_2 \leq 1 \,$ and $ \, - \infty < c_1 < 0 \,$. Again, the coverage probability is convex, and the
probability on $\, 0 \leq C_2 \leq 1$ can be replaced by a mixture on 0 and 1 (with corresponding intervals:
$[c_1 , \, 0] $ and $[c_1, \, 1]$. This provides the first part of the Lemma.

\medskip

Part 3: Finally, to replace $F_1$ by a finite discrete mixture, note that (as above) the coverage probability
is convex on $[ a_1(\lambda) , \, 0]$ and concave on $[ - \infty , \, a_1(\lambda) ]$. Thus, probability on
$[ a_1(\lambda) , \, 0]$ can be replaced by a mixture on $  a_1(\lambda) $ and 0 having larger 
coverage probability and the same (conditional) expected length. By Jensen's inequality. and probability
on $[ - \infty , \, a_1(\lambda) ]$ can be replaced by a point mass at 
$\, c_1^* \equiv E [ C_1 \, | \, - \infty  \leq  C_1 \leq a_1(\lambda) ] \,$ with larger coverage probability
(and the same conditional expected length). Note that, since $\, c_1 > - c_2^* \,$,
if $\, a_1(\lambda) \leq - c_2^* \,$ then the last interval is empty, and no point mass at $c_1^*$ is needed.
\end{proof}

\bigskip

To complete the proof of the Theorem, let $F^*$ denote the distribution generated by $F_1$
and $G_c$ given by the first part of Lemma {\ref{improve}}, and let $F_\lambda$ denote the discrete
mixture given by the last part of the Lemma. Let $\, C \sim F_1 \,$ under $F^*$ and have 
the two-point mixture on $c_1^*(\lambda)$ and $a_1(\lambda)$ under $F_\lambda$. 
Since $F_\lambda$ improves only at a fixed $\lambda$
it remains to find a rule where the improvement is uniform in $\lambda$.

From \eqref{Plampos} the coverage probability (under $F^*$ and $F_\lambda$) is a linear combinations
of  functions $\Phi(\lambda(1 - 1/c))$ where $c$ is $C$ or 0 or 1 or $c_2^*$. For $c = 0$ or $c = 1$,
the function is constant (in $\lambda$), and so the mixture probabilities will sum to provide the
coverage probability of the form
\begin{equation} \label{covmix}
P(\lambda) = b_0 \, + \, b_1 \,  \Phi \left( \lambda \left(1 - \frac{1}{d_2^*} \right) \right) \, - \, b_2 \,
E  \Phi \left( \lambda \left(1 - \frac{1}{C} \right) \right)
\end{equation}
where the expectation is under $F^*$ or $F_\lambda$. Note that the coefficients, $b_i$, are non-negative
and are exactly the same under $F^*$ and $F_\lambda$ (since the probability that $\, c < 0 \,$ is the same
under each distribution). Note that $\, (1 - 1/c_2^*) < 1 \,$ (since $\, c_2^* > 1$) and  
$\, (1 - 1/C) > 1 \,$ (since $\, C < 0$). Thus, from  \eqref{dPdlam} 
the $\lambda$-derivative of $P$ (see \eqref{covmix}) becomes
\begin{eqnarray*}
P'(\lambda) = b_1' \,  \varphi \left( \lambda \left(1 - \frac{1}{d_2^*} \right) \right) \, - \, b_2 \,
E  \left(1 - \frac{1}{C} \right) \, \varphi \left( \lambda \left(1 - \frac{1}{C} \right) \right) \\
\,\,=  \varphi(0)  \left\{ b_1' - b_2 \, E  \left(1 - \frac{1}{C} \right) \,
\exp \left( -\frac{1}{2} \lambda^2  \left[ \left(1 - \frac{1}{C} \right)^2 - \left(1 - \frac{1}{c_2^*} \right)^2 \right] 
\right) \right \} \, .
\end{eqnarray*}
Note that the coefficient of $\lambda^2$ in the exponential function is strictly positive, and so the
exponential function is monotonically (strictly) decreasing to zero. It follows (by the monotone
convergence theorem) that $P'(\lambda)$ is the difference between a positive constant and a function
that is decreasing monotonically to zero. Thus $P(\lambda)$ is monotonically increasing and is
positive for $\lambda$ large. So $P(\lambda)$ can not be minimized as $\, \lambda \rightarrow \infty \,$.
If $\, P'(0) \geq 0 \,$, $P(\lambda)$ is increasing and thus minimized at $\lambda = 0$. Otherwise, 
$P(\lambda)$ has a unique minimum at $\, \lambda = \lambda^* \in (0 ,\, \infty )$.

Case 1: $P_{F^*}'(0) > 0 \,$ and $P_{F^*}(\lambda^*)$ is minimized at $\lambda = 0$.  

As $\, \lambda \rightarrow 0 \,$, $\, a_1(\lambda) \rightarrow 0 \,$, and so $F_\lambda$ tends to
the distribution $F_{\lambda = 0}$ that puts all its probability at the point 
$\, {\tilde{c}_1} = E_{F^*} [ C \, | \, C < 0 ] \,$. By dominated convergence (and \eqref{covmix}),
$\, P_{F_\lambda}(\lambda) \rightarrow P_{F_{\lambda=0}}(0) \,$. Furthermore, since the expected length
is the same for all $F_\lambda$, $F_{\lambda=0}$ also has the same expected length as $F^*$.
As noted above, both  $P_{F^*}(0)$ and $P_{F_{\lambda=0}}(0)$ are monotonically increasing,
and so both are minimized at $\, \lambda = 0 \,$. So the interval defined using $F_{\lambda=0}$
is at least as good as that defined using $F^*$.

Case 2: $P_{F^*}'(0) < 0$ and $P_{F^*}(\lambda^*)$ is a unique minimum.

Consider small interval around $\lambda^*$. If $F^*$ not in the  family of mixtures,
coverage for $F_{\lambda^*}$ is strictly uniformly greater by  $\delta > 0$ on the interval.

\smallskip

Since coverage is bounded above by 1, one can choose $\epsilon$ (depending only on $\delta$) small enough that
$G_\epsilon \equiv (1 - \epsilon) F^* + \epsilon F_{\lambda^*} \,$ satisfies:
$$
\inf_\lambda \, E_{G_\epsilon} \, P(\lambda ; \, C_1 , \, C_2) \, > \, 
\inf_\lambda \, E_{F^*} \, P(\lambda ; \, C_1 , \, C_2) \, + \, \epsilon \delta /2  \, .
$$
See Figure 1. So $F_0$ can not be minimax for $\, \eta < \epsilon \delta / 2 \,$
except as a mixture of above form.

\bigskip

\begin{figure}[h!] \label{plots2}
\begin{center}
\includegraphics[height=3.2in, width=4.6in ]{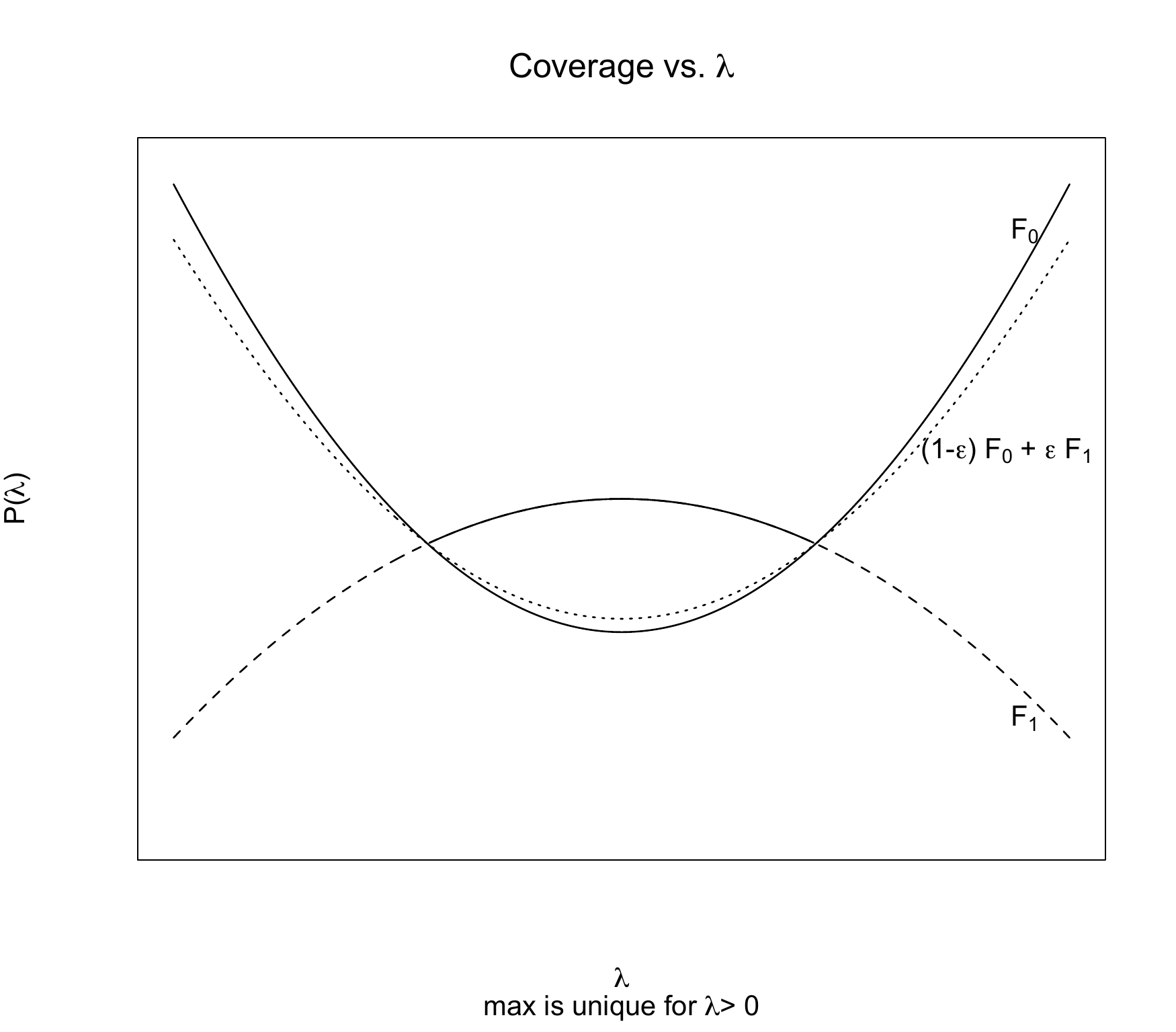}
\caption{Mixture uniformly better than assumed minimax rule.}
\end{center}
\par
\end{figure}

\bigskip\bigskip

There is one remaining issue. The mixture used in the proof above included probability mass at $\, c_2 = 0 \,$,
while the statement of the Theorem omits such mass. To complete the proof, use Lemma \ref{cs}
to replace mass on intervals of the form $[c_1 , \, p]$ (with $\, c_1 < 0 \,$) by intervals $\, [ 0 , \, -c_1 ] \,$.
Then using the transformed mass to redefine $c_2^*$ and the probabilities on the intervals $\phi$ and $\,[0 , \, c_2^* ]$,
the new distribution will provide a mixture where the only interval with its right endpoint equal to zero
is [0, 0], which is equivalent to the empty interval, $\phi$.
\end{proof}

\bigskip

\begin{corollary} Given $\, h > 0 \,$, there is a mixture of two of the 8 intervals in Lemma \ref{improve}
that is optimal in the minimax sense. From computational results described below, there
are constants $\, c_1 < a_1 \leq 0 \,$ and a probability $\, p \in [0, \, 1] \,$ such that
the $p$-mixture of $\, [c_1, \, c_2] \,$ and $\, [a_1 , \, c_2] \,$ is numerically ``minimax'',
where  $ \, c_2 \,$ is chosen so that the mixture has length $h$ (that is, $\, c_2 \,$ satisfies
$\, h = p \, (c_2 - c_1) + (1-p) \, (c_2 - a_1) \,$). Specifically, this two-point mixture numerically maximizes the 
minimal coverage probability (over $ \lambda $) among all rules with expected length $\, h \,$.
\end{corollary}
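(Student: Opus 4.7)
The plan is to combine the reduction supplied by the preceding Theorem, which restricts attention to mixtures on the 8 listed intervals, with a vertex analysis of the resulting (semi-infinite) linear program. For fixed expected length $h$, let $\ell_i$ denote the length of the $i$-th interval $I_i$ among the 8 listed, and consider the program with variables $p_1,\ldots,p_8\geq 0$ and scalar $t$:
\begin{align*}
\max\, t \quad\text{s.t.}\quad \sum_{i=1}^8 p_i = 1,\ \ \sum_{i=1}^8 p_i\ell_i = h,\ \ \sum_{i=1}^8 p_i\, P(\lambda;I_i)\,\geq\, t\ \ \forall\,\lambda\geq 0.
\end{align*}
The derivative computation carried out in the Theorem's proof showed that for any such mixture the coverage $P(\lambda)$ is unimodal in $\lambda$, so its minimum is either at $\lambda=0$ (Case 1) or at a single interior point $\lambda^\ast\in(0,\infty)$ (Case 2). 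In either case only one of the infinitely many coverage inequalities is active at the optimum.

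Second, the argument applies the standard basic-feasible-solution characterization to the reduced LP. After replacing the active coverage inequality by an equality, there are $9$ variables and $3$ equality constraints (normalization, length, and coverage at the minimizing $\lambda$ equal to $t$). A basic optimal solution therefore has at most $9-6=3$ positive coordinates, and since $t$ must be one of them, at most two of the $p_i$ are strictly positive. Hence an optimal minimax mixture is supported on at most two of the eight intervals. The numerical computation referenced in the corollary then identifies these two as sharing the right endpoint and having negative left endpoints $c_1<a_1\leq 0$ straddling $a_1(\lambda^\ast)$, giving the stated form $[c_1,c_2]$ and $[a_1,c_2]$. Finally, existence of a best such two-interval mixture for each $h>0$ follows from continuity of the coverage probability in $(c_1,a_1,p)$, with $c_2$ determined by $h=p(c_2-c_1)+(1-p)(c_2-a_1)$, together with compactness of the parameter set obtained from Lemma \ref{cs}.

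The main obstacle is degeneracy. If the inner infimum over $\lambda$ is attained at two distinct points, or if the length constraint becomes slack at the optimum, the vertex count above allows up to three intervals in the support rather than two. Resolving this in full generality would require a perturbation argument (showing that the two-interval configuration is preserved under small changes in $h$ and that ties in the minimizing $\lambda$ can be broken while maintaining optimality), or, as the corollary is formulated, direct numerical verification that the minimax mixture always has the claimed two-interval form with a common right endpoint.
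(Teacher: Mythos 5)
Your LP reduction is essentially the paper's first step, just dressed differently: the paper fixes the (unique, by the unimodality established in the Theorem's proof) minimizing value $\lambda_0$ and the eight endpoints, notes that coverage at $\lambda_0$ and expected length are both linear in $(p_1,\ldots,p_8)$, and concludes from standard LP theory (two equality constraints) that an optimum has at most two nonzero $p_i$'s; your semi-infinite formulation with the auxiliary variable $t$, reduced to a finite LP via the single active coverage constraint, reaches the same count (your ``$9-6=3$'' bookkeeping is garbled, but the conclusion of at most two positive $p_i$ is right). Moreover, the ``obstacle'' you flag --- the infimum over $\lambda$ being attained at two points --- is already ruled out by the monotone-plus-unimodal structure of $P'(\lambda)$ proved in the Theorem, so that worry is not where the real work lies.

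The genuine gap is in your existence argument. You assert that a best two-interval mixture exists ``by continuity \ldots together with compactness of the parameter set obtained from Lemma \ref{cs},'' but that set is not compact: Lemma \ref{cs} only gives $-c_2 \le c_1 \le 1$, $c_2>0$, and under the length constraint $h = p(c_2-c_1)+(1-p)(c_2-a_1)$ the endpoint $c_1$ can still run off to $-\infty$ along a sequence with $p\to 0$ (the product $p(c_2-c_1)$ stays bounded). Ruling out this escape to the boundary is precisely the substantive part of the paper's proof of the Corollary: for Case 2 it substitutes $c_2 = h + a_1 + p(c_1-a_1)$, differentiates the coverage at the minimizing $\lambda^*$ with respect to $p$, and shows the derivative is strictly positive near $p=0$ (the $\varphi$-term is positive and $\Phi\left(\lambda\left(1+\tfrac{1}{a_1}\right)\right) > \Phi\left(\lambda\left(1+\tfrac{1}{c_1}\right)\right)$ since $c_1<a_1$), so the minimal coverage cannot be maximized in the limit $p\to 0$, and the supremum is attained at finite parameter values; analogous computations handle the other surviving cases. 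Without this (or some substitute) your argument shows only that \emph{if} an optimal mixture of the eight intervals exists it can be taken two-supported; it does not deliver the attainment claim, which is what the Corollary needs in order to conclude that the minimax value is achieved rather than merely approached as the endpoints or probabilities degenerate.
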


\begin{proof}
Consider any mixture of the 8 intervals given in Theorem 1, and recall from the proof of the Theorem that 
the minimum coverage over $\, \lambda \,$ occurs at a fixed value $\lambda_0$ where $\, \lambda_0 = 0 \,$ or 
is the minimizing $\lambda$-value.  Consider fixing the interval end points (say, 
$\, \{ (r_i , \, s_i ) : \,\, i = 1 , \, \cdots \, , \, 8 \, \} \,$. Then, as a function of the mixing probabilities 
$( p_1 , \, \cdots \, , \, p_8)$,  the coverage probability is
$\, \sum_{i=1}^8 p_i \, P(\lambda_0 , \, r_i , \, s_i) \,$ and the expected length is
$\, \sum_{i=1}^8 p_i \, (s_i - r_i ) \,$. Thus both the coverage probability and the expected length 
are linear in the  $\, p_i $'s. Therefore, maximizing the coverage
probability over $( p_1 , \, \cdots \, , \, p_8)$ subject to $ \, \sum_{i=1}^8 p_i \, = 1 \,$ and 
$\, \sum_{i=1}^8 p_i \, (s_i - r_i ) = h \,$ is a linear programming problem. As a consequence the coverage 
is maximized at a solution with at most two $p_i$'s non-zero. 

Thus, an optimal rule can be found by considering each pair of the 8 intervals in Theorem 2 and 
optimizing over the endpoints and mixing probability. In examining the 8-choose-2 intervals, many 
have the same form or can be obtained from others by taking limits of the endpoints or the probability. 
Also, for the length $\, h \leq 1 \,$, the intervals are of the form $[c_1 , \, 1 ]$ with $\, c_1 < 0 \,$, all of which
have coverage equal to .5. Thus, $\, c_1 = 0 \,$ minimizes the length, and by convexity  (Lemma
{\ref{improve}}), the optimal rule for $\, h \leq 1 \,$ is  a mixture of the
interval [0, 1] and the ``empty'' interval, $\phi$ (or equivalently, [0, 0]), with mixing probability $\, p = h $). 
As a consequence, only the following cases need to be treated (with the equivalent or redundant 
cases listed as ``subcases''):

\bigskip\medskip

Case 1: $\,\,  [a_1, \, 1]  \,\, [c_1,c_ 2]  \quad -1 \leq a_1 < 0 , \,\, -c2 < c1 < 0, \,\,  c2 > 1 $

$\qquad$ subcases: 
$\,\,  [c_1, \, 1] \,\, [c_1, \, c_2] , \,\,\, [c_1, \, 1] \,\,  [a_1, \, c_2] , \,\,\, [a_1 , \, 1] \,\, [a_1 , \, c_2] ,
\,\,\, [0 , \, 1 ] \,\, [ 0 , \, c_2 ]  $

 $\qquad\qquad\qquad\quad  
 [a_1, \, 1] \,\, [0, \, c_2]  ,  \,\,\, [c_1, \, 1] \,\, [0, \, c_2], \,\,\, [0, \, 1] \,\,  [c_1, \, c_ 2] , \,\,\, [0, \, 1] \,\, [a_1, \, c_2]$
 
 \bigskip 

Case 2: $\,\, [a_1, \, c_2] \,\, [c_1, \, c_2]  \quad  c_1 < a_1 <  0, \,\, -c_2 < c_1 < 0, \,\, c_2 > 1 $  

$\qquad$ subcases: $\,\, [0 , \, c_2] \,\, [ c_1 , \, c_2 ] , \,\,\, [0, \, c_2 ] \,\, [a_1 , \, c_2 ]$

\bigskip

Case 3: $\,\,   [a_1,\, 1]\,\,  [0, \, c_2]  \quad  -c_2 < a_1 < 0, \,\, c_2 > 1  $ 

$\qquad$ subcases: $ \,\,  [c_1, \, 1] \,\,  [0,\, c_2] , \,\,\, [0, \, 1] \,\, [0,\, c_2], \,\,\, [a_1, \, 1] \,\, [0, \, 1 ] ,
\,\,\, [c_1, \, 1] \,\,  [0, \, 1] $

\bigskip

Case 4: $\,\,   [a_1, \, 1] \,\, [1,\, c_2] \quad   -c_2 < a_1 < 0, \,\, c_2 > 1  $

$\qquad$ coverage $\leq \, .5$, use $\,\, \phi \,\, [0, \,1]$ 

$\qquad$ subcases: $ \,\,   [c_1, \, 1] \,\, [1,\, c_2] , \,\,\, [0,\, 1] \,\, [1, \, c_2] $ 

\bigskip

Case 5: $\,\, [a_1, \, 1] \,\, [c_1, \, 1]  \quad  -1 \leq a1 < 0, \,\, a_1 < c_1 < 0  $  

$\qquad$ coverage $\leq \, .5$, use $\,\, \phi \,\,  [0, \,1] $

\bigskip

Case 6: $\quad  \phi \,\, [a_1, \, 1]  \quad -1 \leq a1 < 0 $ 

$\qquad$coverage $\leq \, .5$, use $\,\, \phi \,\, [0, \,1]$ 

$\qquad$ subcase: $ \,\, \phi \,\,  [c_1, \, 1]  $

\bigskip

Case 7:$\quad \phi \,\, [c_1, \, c_2] \quad  -c_2 < c_1 < 0, \,\,  c_2 > 1  $ 

$\qquad$ subcase: $\,\,  \phi \,\, [a_1, \, c_2] $ 

\bigskip

Case 8: $\quad  \phi \,\, [0, \, c_2]   \quad c_2 >    \quad$ coverage $\leq \, .5$, use $\,\, \phi \,\, [0, \,1] $ 

$\qquad$ subcases: $ \,\, \phi \,\,  [1, \, c_2] $ 

\bigskip\bigskip

Thus, only cases 1, 2, 3, and 7 need to be treated. Numerical optimization (discussed in Section 2)
indicates that Case 2 is always at least as good as any other. In fact,  as noted in Portnoy (2018),
it appears that for $h$ larger than a cutoff slightly less than 5 (coverage probability
about .8), the mixing probability is 1, and the non-randomized invariant interval $\, [c_1, \, c_2] \, $ is
optimal, at least according to the numerical results. 

However, as indicated in Section 3, the numeric
optimization is surprisingly difficult, and can not prove that a given rule is optimal, or even that
there is an optimal rule (as the minimax coverage may be a limit as the endpoints or probabilities
tend to their boundaries). Therefore, it remains to prove that there is an optimal invariant mixture.

By the above proof of this Corollary, we need only show that each of the two-point mixtures in 
Cases 1, 2, 3, and 7 achieve the maximum of the minimum coverage (over $\lambda$) at finite 
values for the endpoints and probability. Consider the apparent optimal rule given by Case 2. 
Since the minimizing $\lambda^*$ is finite ($\lambda^* = 0 $ or $\lambda* \in (0, \, + \infty)$), the minimal
coverage probability is continuous in $(c_1, \, a_1, p)$. Thus, the maximum will be attained as long as 
$c_1$ is bounded away from $\, - \infty $ (since $\, c_1 < a_1 < 0 $, and $\, p \in [0, \, 1]$). Since the length
is fixed, if $c_1$ were unbounded, then $p$ would need to tend to zero (along some sequence).
So consider the derivative of the coverage probability for Case 2 as $\, p \rightarrow 0 \,$. 
Using \eqref{Plampos}, the coverage probability becomes
\begin{eqnarray*} \label{Case2p}
&  & p \,  P(\lambda , \, c_1 , \, c_2) +  (1-p) \, P(\lambda, \, a_1 , \, c_2)  \, = \\
&   & \,\,\, \Phi \left( \lambda \left(1 + \frac{1}{c_2} \right) \right) + 1  - 
p \,  \Phi \left( \lambda \left(1 + \frac{1}{c_1} \right) \right) - (1-p) \, \Phi \left( \lambda \left(1 + \frac{1}{a_1} \right) \right) \, .
\end{eqnarray*}
Now $\, h = p(c_2 - c_1) + (1-p)(c_2 - a_1) \,$, or $c_2 = h + a_1 + p(c_1 - a_1) \,$. Inserting $c_2$
in the first term in \eqref{Case2p} and differentiating with respect to $p$ gives:
$$
\frac{\lambda^*(c_1 - a_1)}{(h + a_1 + p(c_1 - a_1))^2} \, \varphi \left( \lambda \left(1 + \frac{1}{c_2} \right) \right) \,
 - \,  \Phi \left( \lambda \left(1 + \frac{1}{c_1} \right) \right) + \Phi \left( \lambda \left(1 + \frac{1}{a_1} \right) \right) \, .
$$
The first term is clearly positive, and the difference in the last two is positive since $\, c_1 < a_1 \,$. 
Thus, the minimal coverage probability can {\it not} be maximized as $\, p \rightarrow 0 \,$. Hence, 
the maximum is attained at finite values. Entirely similar proofs work for the other 3 cases. 
As a consequence, from Theorem 1 and the Corollary, the optimal invariant rule is optimal among all rules.
\end{proof}

 \bigskip
 
 \section{Some details of the numerical optimization}
 
 Numerical optimization for each of the Cases above appears to be remarkably difficult and complicated. One
 minor complication is that a separate minimization over $\lambda$ is needed before the minimal coverage 
 can be maximized over the endpoints and probability variables. Fortunately, the R-function {\tt optimize} 
 (see R Core Team (2015)) appears to work quickly and efficiently for the $\lambda$-minimization, 
 especially since it is possible
 to compute an upper bound on $\lambda$
 above which the $\lambda$-derivative is positive (and so which bounds the minimizing value). 
 
 Now consider  numerically maximizing the minimal probability over the interval variables, 
 say $(c_1, \, a_1, \, c_2, \, p)$ for Case 2, subject to fixing 
 $$
  h = p(c_2 - c_1)+ (1-p)(c_2 - a_1) \,\, .
 $$
 This presents a more serious problem: the coverage probability is not differentiable when any 
 endpoint is zero. This suggests that trying to solve the equation of partial derivatives may by very 
 problematic, thus precluding the use of Lagrange multipliers to handle the length constraint. 
 As an alternative, solve the length equation for $c_2$ and use the R-function {\tt optim } (R Core Team (2015))
 to maximize  over $(c_1, \, a_1, \, p)$. Unfortunately, incorporating the constraints 
 ($ -c_2 < c_1 < a_1 < 0 \, , \,\, c_2 > 1)$ still posed numerical complications. This algorithm often worked, 
 but for some $h$-values the routine 
 indicated a failure to converge numerically, and in other cases gave very unreliable results depending on
 starting values used. Thus, an initial grid search was used (with mesh .1 in each variable), and the routine
 {\tt optim} was used on the maximizing grid rectangle. Even then, some special programming was needed to
 deal with the constraints (especially for values of $h$ less than 2.5). Nonetheless, after considerable refinement,
 the computer results appeared to be reliable, with accuracy of at least 4 decimal places. With the obvious 
 modifications, the same code was used to treat the other cases. The output provided the plot in
 Portnoy (2018), though (of course) none of the numerical results can be guaranteed. 
 
 \bigskip\bigskip
 
 For completeness, the following gives the R-code used for Case 2 (omitting modification for smaller $h$-values):
\bigskip  {\tt \\
\# case 2  [c1,c2] [c11,c2]  -c2 < c1 < 0 ; -c1 < c11 < 0  \\
\#  b[1]=c1 , b[2] = c11 \\
\# b[3] = p  ;      h = p*(c2-c1) + (1-p)*(c2-c11) \\
c2h <- function(b,h) \{ p <- b[3] \\
return( h + p*b[1] + (1-p)*b[2] ) \} \\
concheck <- function(c1,c11,c2) \{ \# check constraints \\
return( (c2 > 1 \& c1 > -c2 \& c11 > c1) ) \} \\
c1s <- -.00001 - .1*(0:200); c1s[11] <- -1 \\
c11s <- -.00001 - .1*(0:200) \\
ps <- .1*(0:10) ; ps[11] <- .99999 \\
\\
P0 <- function(lam,b,h) \{ \# b[1]= c1 , b[2] = c11 , b[3] = p \\
\# h = p*(c2-b[1]) + (1-p)*(c2-b[2]) \\
c2 <- c2h(b,h) ; p <- b[3] \\
return( p*P(lam,b[1],c2) + (1-p)*P(lam,b[2],c2) ) \} \\
\\
dP0 <- function(lam,b,h) \{ \# b[1]= c1 , b[2] = c11 , b[3] = p \\
p <- b[3] \\
\# h = p*(c2-b[1]) + (1-p)*(c2-b[2])\\
c2 <- c2h(b,h)  \\
return( p*dP(lam,b[1],c2) + (1-p)*dP(lam,b[2],c2) ) \} \\
\\
P1 <- function(b1,h=h,ret=ret) \{ \\
\# b1[1]=c1, b1[2]=c11, b1[3]=p ;  min over lambda \\
\# lam0 = new lam* on return \\
 c2 <- c2h(b1,h) ; b <- c(b1,c2) \\
 if(!(c2 > 1)) \\
   \{if(ret) return(1+runif(1)) \\
   else return(list(objective=1+runif(1),min=-1)) \} \\
\# get upper bound lam0; from earlier runs lam0 = 3 should work, \\
\# but check for P' > 0 \\
d <- -1 ; lam0 = 2 \\
while(d <= 0) \{ \\
 d <- dP0(lam0,b,h) \\
 lam0 <- lam0 + 1 \} \\
\# min over lam in [0, lam0] \\
m <- optimize(P0,c(0,lam0),b,h) \\
if(ret) return(m\$objective) else return(m) \} \\
\\
for(i in 1:length(hs)) \{ h <- hs[i] \\
\# max over grid \\
c1m <- 0 ; c11m <- 0 ; pm <- 0  ; Pm <- 0 \\
for(c1 in c1s) \{ for(c11 in c11s) \{ for(p in ps) \{ \\
b <- c(c1,c11,p) \\
c2 <- c2h(b,h)  \\
if( concheck(c1,c11,c2) ) \{  \\
  Pn <- P1(b,h,T) \# ;  print(c(Pn,c1,c11,c2,p)) \\
  if(Pn > Pm) \{ \\
    Pm <- Pn ; c1m <- c1 ; c11m <- c11 ; pm <- p \}  \}  \}\}\} \\
\# max over cell \\
c2 <- c2h(c(c1m,c11m,pm),h) \\
low <- c(max(c1m-.1,-c2),max(c11m-.1,c1),max(pm-.1,0)) \\
up <- c(min(c1m+.1,-.000001),min(c11m+.1,-.000001), \\
min(pm+.1,.999999)) \\
b0 <- (low+up)/2 \\
m <- optim(b0,P1,lower=low,upper=up, \\
control=list(fnscale=-1),method=''L-BFGS-B'',h=h,ret=T) \\
 if(m\$converge != 0) print(paste( \\
 "possible non-convergence", m\$converge =,m\$converge)) \\
 lam <- P1(m\$par,h,F) \\
 if(abs(lam\$objective - m\$value) > .000001)  \\
  print(paste( "lam min problem:" \\
  "(P*,P(lam*)):",c(m\$value,lam\$objective)) ) \\
 out1[i,] <- c(m\$value,c2h(m\$par,h),m\$par,h,lam\$min)  \} \\
\} \\
 }
\bigskip

\section{Multivariate Confidence Sets}

\smallskip

\begin{theorem} 
Let $\, X \sim {\cal{N}}_p ( \mu , \, \Sigma ) \,$. Then to achieve
$$
\inf_{\mu , \, \Sigma} P \left\{ || \mu || \leq c \, || X || \right\} \geq 1 - \alpha
$$
it suffices to take $\, c = 3.85 \, \alpha^{-1/p} \,$.
\end{theorem}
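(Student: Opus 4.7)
The plan is to bound the noncoverage probability $P(\|X\| < \|\mu\|/c)$ uniformly over $(\mu, \Sigma)$ via a density-volume estimate. The event $\|X\| < t$ with $t = \|\mu\|/c$ is the event $X \in B_t$, the Euclidean ball of radius $t$ about the origin, so
\[
P(\|X\| < t) \;\leq\; \operatorname{vol}(B_t) \cdot \max_{x \in B_t} f_X(x),
\]
where $\operatorname{vol}(B_t) = \pi^{p/2} t^p / \Gamma(p/2+1)$. The Gaussian density bound gives $f_X(x) \leq (2\pi)^{-p/2} |\Sigma|^{-1/2} \exp(-d_M^2(x,\mu)/2)$, and since we may take $c > 1$ and hence $\|\mu\| > t$, the max over $x \in B_t$ is controlled by the Mahalanobis distance $d_M(\mu, B_t)$ from $\mu$ to the ball.

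Next I would exploit scale invariance (under $\mu \to s\mu$, $\Sigma \to s^2 \Sigma$) to normalize $\|\mu\| = 1$, and identify the worst case over $\Sigma$ for the resulting bound. Focusing on isotropic $\Sigma = \sigma^2 I$, one has $d_M^2 = (1 - 1/c)^2/\sigma^2$ and $|\Sigma|^{1/2} = \sigma^p$, so the bound becomes a product of a power of $\sigma$ and an exponential in $1/\sigma^2$. Standard calculus (differentiating $p\log\sigma + (1-1/c)^2/(2\sigma^2)$ in $\sigma$) gives the extremal scale $\sigma^{*2} = (1-1/c)^2/p$, at which the bound reduces to
\[
\frac{(p/(2e))^{p/2}}{(c-1)^p \, \Gamma(p/2+1)}.
\]

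Setting this expression $\leq \alpha$ and solving for $c$ yields $c \geq 1 + K_p\,\alpha^{-1/p}$, where $K_p = [(p/(2e))^{p/2}/\Gamma(p/2+1)]^{1/p}$. By Stirling, $\Gamma(p/2+1) \sim \sqrt{\pi p}(p/(2e))^{p/2}$, so $K_p \sim (\pi p)^{-1/(2p)} \to 1$; a direct computation confirms $K_p \leq 1$ for all $p \geq 1$. Since $\alpha \leq 1$ forces $\alpha^{-1/p} \geq 1$, the crude estimate $1 + K_p\alpha^{-1/p} \leq (1 + K_p)\alpha^{-1/p} \leq 3.85\,\alpha^{-1/p}$ holds uniformly, which is the form stated in the theorem. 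The numerical constant 3.85 is loose but gives a single convenient envelope that dominates the exact bound for every $p \geq 1$.

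The main obstacle is verifying that the isotropic structure truly provides the extremal rate for the density-volume bound, and controlling anisotropic or nearly singular $\Sigma$, for which $|\Sigma|^{-1/2}$ blows up while $d_M^2$ does not offer compensating exponential decay in every direction. Handling this will likely require an auxiliary argument reducing the general case to the isotropic one, for example by dominating a general Gaussian by an isotropic one via Anderson's inequality on the symmetric convex set $B_t$, or by combining the density bound with a 1-dimensional projection estimate onto the direction $\mu/\|\mu\|$ for directions in which $\Sigma$ is nearly degenerate.
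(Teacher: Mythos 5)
Your own closing caveat is the crux, and it cannot be repaired: the isotropic family is not the extremal one for the event $\{\|X\|<\|\mu\|/c\}$, so no reduction to $\Sigma=\sigma^2 I$ can work. Take $p\ge 2$, $\mu=e_1$ (so $\|\mu\|=1$) and $\Sigma=\mathrm{diag}(1,\epsilon^2,\dots,\epsilon^2)$. As $\epsilon\to 0$ the noncoverage tends to $P(|N(1,1)|<1/c)=\Phi(-1+1/c)-\Phi(-1-1/c)\approx 2\varphi(1)/c$, i.e.\ it decays like $c^{-1}$ for every $p$, not like $c^{-p}$ as in your isotropic calculation (which is itself correct, but gives only the behavior inside that one-parameter family). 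Neither of your proposed rescues closes this: Anderson's inequality compares a centered Gaussian with its shift, or two centered Gaussians with ordered covariances, whereas here you would need to dominate a noncentered, nearly singular Gaussian by an isotropic one on $B_t$ --- and the example shows that domination is false; the one-dimensional projection onto $\mu/\|\mu\|$ does give a valid uniform bound, but it is of order $c^{-1}$ and hence only yields $c\asymp\alpha^{-1}$, not $\alpha^{-1/p}$. In fact, pushing your own worry quantitatively puts it in direct tension with the target: with $p=2$, $\alpha=0.01$, $c=3.85\,\alpha^{-1/2}=38.5$, the configuration above has noncoverage $\approx 0.0126>\alpha$. So the near-degenerate covariances are not a technical loose end of your argument; they are the binding case, and any complete argument must engage them directly (and you should test the statement itself against this example before investing further).

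For comparison, the paper takes a completely different route with no volume--density estimate: it diagonalizes $\Sigma$, divides through by the smallest eigenvalue $\gamma_0$ so that $\|X\|^2/\gamma_0=\sum\lambda_i Y_i^2$ with $\lambda_i=\gamma_i/\gamma_0\ge 1$ and unit-variance normal $Y_i$, invokes the Oman--Zacks (1981) representation of $\sum\lambda_i Y_i^2$ as a central $\chi^2_{p+2K+2L}$ with $K$ Poisson and $L\ge 0$ (so $L$ can be discarded), and then bounds the lower tail of the Poisson-mixed chi-square via $e^{-x/2}\le 1$, Stirling, and a geometric series, finally choosing $c^2=2e^2\alpha^{-2/p}a$. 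The delicate step in that argument is exactly the normalization identifying $\|\mu\|^2/\gamma_0$ with the noncentrality of the $Y_i$'s (these coincide only when $\mu$ lies in the minimal-eigenvalue directions), and your degenerate example, with $\mu$ along the largest-variance direction, is precisely the configuration on which that identification should be scrutinized. In short: your approach as written has a genuine gap at the anisotropic reduction, the suggested fixes fail for structural reasons, and the failure mode points at an issue with the stated constant itself rather than merely with your method.
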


\begin{proof}

First (without loss of generality) assume $\Sigma$ is non-singular (otherwise, restate the
problem in a smaller dimensional space).

Now, let $\, \Sigma = \Gamma ' D(\gamma) \Gamma \,$ with $\Gamma$ orthonormal, and let 
$\, \gamma_0 =  \min \{ \gamma_i \} $. Define
$\, \lambda = \gamma / \gamma_0 \,$ and $\, \nu = \mu / \gamma_0 \,$.  Then
(dividing through by  $\gamma_0$), the coverage probability is
\begin{equation} \label{CP1}
CP = P \left\{ || \nu ||^2 \leq c^2 \, \sum \lambda_i \, Y_i^2 \right\}
\end{equation}
where $Y_i$ are independent ${\cal{N}}(\nu_i , \, 1)$. Hence, from the well-known representation
of a non-central Chi-square, $\, Y_i ^2 \, \sim \, \chi^2_{p+2K_i} \,$ where
$\{ K_i \}$ are independent Poissons with mean $\,  \nu_i^2 /2 \,$. Note that $\, \lambda_i \geq 1 \,$.
Then, from Oman and Zacks (1981), 
\begin{equation} \label{chi2}
 \sum \lambda_i \, Y_i^2 \, \sim \, \chi^2_{p+2K+2L}
 \end{equation}
 where $\, K \,$ is Poisson with mean $\, \delta \equiv  || \nu ||^2 / 2 \,$ and $\, L \,$ is an (independent) 
 sum of negative binomial random variables (with parameters depending on $p$ and $\lambda$).
 It follows that
 \begin{eqnarray} \label{CP2}
 1 - CP  & \leq  & P \left\{ || \nu ||^2 \geq  c^2 \, \chi^2_{p+2K} \right\} \\
& = & \sum_{k=0}^{\infty} \, \int_0^{2 \delta/c^2} \, \frac{ x^{p/2+k-1} \, e^{-x/2} } { \Gamma(p/2+k) \, 2^{p/2+k)} }
 \, \frac{\delta^k \, e^{- \delta}} { k! } \,\,  {\mbox{\text{\rm d}}}x   \nonumber \\
 & \leq & \sum_{k=0}^{\infty} \, \frac{ (2 \delta / c^2)^{p/2 + k} } { \Gamma(p/2+k) \, 2^{p/2+k)} } \, 
\frac{\delta^k \, e^{- \delta}} { k! } 
\end{eqnarray}
where the last inequality uses $\, e^{-x/2} \leq 1\,$. Now to continue, use the fact that
$\, \delta^{p/2 + 2k} \, e^{-\delta} \,$ is maximized at $\, \delta = p/2+k \,$, and use Stirling's approximation
(which is larger than the approximated $\Gamma$-function). Then
\begin{eqnarray} \label{CP3}
1 - CP & \leq & (1/c^2)^{p/2} \, \sum_{k=0}^{\infty} \, (1/c^2)^k \, \frac{ (p/2 + 2k)^{p/2+2k} }
{ 2 \pi (p/2 + k)^{p/2+k+1/2} \, e^{- (p/2+k)} \,\, k^{k+1/2} \, e^{-k} } \nonumber  \\
& = & \frac{1}{2\pi} \, \left( \frac{e}{c^2} \right)^{p/2} \, \sum_{k=0}^{\infty} \, \left( \frac{e^2}{c^2} \right)^k \,
\left( 1 + \frac{k}{p/2 + k} \right)^{p/2+k} \, \left( 1 + \frac{p}{4k} \right)^k  \nonumber \\
&\,\,& \qquadÊ\qquad \qquad  \times \,\,  (\max\{k , \, 1 \} (p/2 + \max\{k , \, 1 \}))^{-1/2}  \nonumber \\ \nonumber \\
& \leq &  \frac{1}{2\pi} \,  \left( \frac{2 e^{3/2}}{c^2} \right)^{p/2} \, \sum_{k=0}^{\infty} \, 
\left( \frac{2 e^2}{c^2} \right)^k \, \frac{1}{\max\{k , \, 1 \}}
 \end{eqnarray}
 where $\, ( 1 + k/(p/2 + k))^{p/2+k} \,$ is bounded by $2^{p/2+k}$ , and $\, (1 + p/(4k))^k \,$ is bounded by $\, e^{p/4} \,$.
 Note that replacing $k$ by $\,  \max \{ k , \, 1 \} \,$ follows from evaluating the summand at $\, k = 0 \,$.
 
 \medskip
 
 To find an explicit bound for $c$, use the expression
 $\, \sum_{k=0}^{\infty} \, u^k / \max \{ k , \, 1 \} \, = 1 - \log(1-u) \,$.
 Convergence of the sum in \eqref{CP3} requires $\, c^2 > 2 e^2 \,$. So set 
 \begin{equation} \label{c2def}
 c^2 = 2 e^2 \, \alpha^{-2/p} \, a 
 \end{equation} 
 (with $ a > 1$),
 and bound $\alpha$ by 1 when this is substituted in the log-term. Then \eqref{CP3} becomes
\begin{equation}
1 - CP \leq  \alpha \, \left( \frac{1}{2 \pi} \, \left( \frac{1}{a \sqrt{e} } \right)^{p/2} \, (1 - \log( 1 - 1/a) ) \right) \, .
\end{equation}

Thus, setting $\, a = 1 / (1 - \exp ( - 2 \pi \, e^{p/4} + 1 ) ) \,$, some algebra yields the inequality
\begin{equation}
1 - CP \leq \alpha \, a^{-p/2}  \leq \alpha \, . 
\end{equation}

Finally, since $\, p \geq 1 \,$, numerical evaluation gives $\, a \leq 1.00086 \,$ and one can choose
$\, c = 3.85 \, \alpha^{-1/p} \,$ from \eqref{c2def} to get uniform coverage $\, 1 - \alpha \,$. 
\end{proof}

\bigskip \bigskip

\end{document}